\theoremstyle{plain}
\newtheorem*{thmA}{Theorem A}
\newtheorem{thm}{Theorem}[section]
\newtheorem{lem}[thm]{Lemma}
\theoremstyle{definition}
\newtheorem{dfn}[thm]{Definition}
\newcommand{\F}{\mathbb{F}}
\newcommand{\Z}{\mathbb{Z}}
\newcommand{\N}{\mathbb{N}}
\newcommand{\C}{\mathbb{C}}
\renewcommand{\P}{\mathbb{P}}
\newcommand{\No}{\mathcal{N}}
\DeclareMathOperator{\Aut}{Aut}
\DeclarePairedDelimiter\ceil{\lceil}{\rceil}
\DeclarePairedDelimiter\floor{\lfloor}{\rfloor}
\begin{document}

\title{A note on strongly real Beauville $p$-groups}

\author[\c{S}.\ G\"ul]{\c{S}\"ukran G\"ul}
\address{Department of Mathematics\\ Middle East Technical University\\
	06800 Ankara, Turkey}
\email{gsukran@metu.edu.tr}

\keywords{Strongly real Beauville groups; free product \vspace{3pt}}

\thanks{ The author is supported by the Spanish Government, grant MTM2014-53810-C2-2-P, the Basque Government, grant IT974-16, and the ERC Grant PCG-336983.}

\begin{abstract}
	We give an infinite family of non-abelian strongly real Beauville $p$-groups for any odd prime $p$ by considering the lower central quotients of the free product of two cyclic groups of order $p$. This is the first known infinite family of non-abelian strongly real Beauville $p$-groups.
\end{abstract}	
	
\maketitle
	
\section{Introduction}

A \emph{Beauville surface\/} of unmixed type is a  compact complex surface isomorphic to 
$(C_1\times C_2)/G$, where $C_1$ and $C_2$ are algebraic curves of genus at least $2$ and $G$ is a finite group acting freely on $C_1\times C_2$ and faithfully on the factors $C_i$ such that $C_i/G\cong \P_1(\C)$ and the covering map $C_i\rightarrow C_i/G$ is ramified over three points for $i=1,2$.
Then the group $G$ is said to be a \emph{Beauville group\/}.

The condition for a finite group $G$ to be a Beauville group can be formulated in purely group-theoretical terms.

\begin{dfn}
For a couple of elements $x,y \in G$, we define
\[
\Sigma(x,y)
=
\bigcup_{g\in G} \,
\Big( \langle x \rangle^g \cup \langle y \rangle^g \cup \langle xy \rangle^g \Big),
\]
that is, the union of all subgroups of $G$ which are conjugate to $\langle x \rangle$, to 
$\langle y \rangle$ or to $\langle xy \rangle$. Then $G$ is a Beauville group if and only if the following conditions hold:
\begin{enumerate}
	\item $G$ is a $2$-generator group.
	\item There exists a pair of generating sets $\{x_1,y_1\}$ and $\{x_2,y_2\}$ of $G$ such that 
	$\Sigma(x_1,y_1) \cap \Sigma(x_2,y_2)=1$.
\end{enumerate}
Then $ \{x_1,y_1\}$ and $\{x_2,y_2\}$ are said to form a \emph{Beauville structure\/} for $G$. 
\end{dfn}

\begin{dfn}
	Let $G$ be a Beauville group. We say that $G$ is \emph{strongly real\/} if there exists a Beauville structure $\{\{x_1,y_1\}, \{x_2,y_2\} \}$ such that there exist an automorphism $\theta \in \Aut(G)$ and elements $g_i \in G$ for $i=1,2$ such that 
	\[
	g_i \theta(x_i)g_i^{-1}=x_i^{-1} \ \ \text{and} \ \ g_i \theta(y_i)g_i^{-1}=y_i^{-1}
	\]
	for $i=1,2$. Then the Beauville structure is called \emph{strongly real Beauville structure}.
\end{dfn}
In practice, it is convenient to take $g_1=g_2=1$. 

In 2000, Catanese \cite{cat} proved that a finite abelian group is a Beauville
group if and only if it is isomorphic to $C_n \times C_n$, where $n>1$ and $\gcd(n,6)=1$.  Since for any abelian group the function $x \longmapsto -x$ is an automorphism,
the following result is immediate.
\begin{lem}
	\label{abelian}
	Every abelian Beauville group is a strongly real Beauville group.
\end{lem}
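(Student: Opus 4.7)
The plan is essentially immediate from the definition. Given an abelian Beauville group $G$ with Beauville structure $\{\{x_1,y_1\},\{x_2,y_2\}\}$, I would show that this very structure is already strongly real, so no second choice of structure is needed. The automorphism to use is the inversion map $\theta\colon G\to G$, $\theta(g)=g^{-1}$. First I would verify that $\theta\in\Aut(G)$: since $G$ is abelian, $\theta(gh)=(gh)^{-1}=h^{-1}g^{-1}=g^{-1}h^{-1}=\theta(g)\theta(h)$, and $\theta$ is clearly bijective (indeed involutive), so it is indeed an automorphism. This is precisely the step where the abelian hypothesis enters.

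Next I would, following the remark in the excerpt, set $g_1=g_2=1$. Because $G$ is abelian, conjugation by any element is trivial, so the four conditions $g_i\theta(x_i)g_i^{-1}=x_i^{-1}$ and $g_i\theta(y_i)g_i^{-1}=y_i^{-1}$ collapse to $\theta(x_i)=x_i^{-1}$ and $\theta(y_i)=y_i^{-1}$ for $i=1,2$, which hold by the very definition of $\theta$. Consequently $\{\{x_1,y_1\},\{x_2,y_2\}\}$ is a strongly real Beauville structure and $G$ is a strongly real Beauville group.

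There is no real obstacle: the lemma functions as a sanity check for the definition, and its proof amounts to the one observation that inversion is an automorphism exactly when the group is abelian. The more substantive content of the paper will presumably be in constructing strongly real Beauville structures in the \emph{non-abelian} $p$-group setting, where inversion fails to be a homomorphism and a genuine automorphism $\theta$ must be produced.
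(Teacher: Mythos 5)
Your proof is correct and is exactly the paper's argument: the paper derives the lemma as an immediate consequence of the observation that inversion is an automorphism of any abelian group, which is precisely the content you spell out. No differences to report.
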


Thus, there are  infinitely many abelian strongly real Beauville $p$-groups for $p\geq5$.
 
Recall that the only known infinite family of Beauville $2$-groups was constructed in \cite{BBPV}. However, one of the main results in \cite{BBPV} shows that these Beauville $2$-groups are not strongly real. On the other hand, in \cite{fai}, Fairbairn has recently given the following examples of strongly real Beauville $2$-groups.
The groups
\[
G= \langle x,y \mid x^8=y^8=[x^2,y^2]=(x^iy^j)^4=1 \ \text{for} \ i,j=1,2,3 \rangle,
\]
and
\[
G= \langle x,y \mid (x^iy^j)^4=1 \ \text{for} \ i,j=0,1,2,3 \rangle
\]
are strongly real Beauville groups of order $2^{13}$ and $2^{14}$, respectively.

If $p\geq3$ there is no known example of a non-abelian strongly real Beauville $p$-group. Thus, up to now the only examples of strongly real Beauville $p$-groups are the abelian ones and the two groups given above.

In this paper, we give infinitely many non-abelian strongly real Beauville $p$-groups for any odd prime $p$. To this purpose, we work with the lower central quotients of the free product of two cyclic groups of order $p$. The main result of this paper is as follows.

\begin{thmA}
	Let $F=\langle x,y \mid x^p , y^p \rangle$ be the free product of two cyclic groups of order $p$ for an odd prime $p$, and let $i= k(p-1)+1$ for $k\geq 1$. Then the quotient $F/\gamma_{i+1}(F)$ is a strongly real Beauville group.
\end{thmA}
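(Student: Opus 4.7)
My plan is to produce a single automorphism $\theta\in\Aut(G)$ of $G:=F/\gamma_{i+1}(F)$ together with two generating pairs each inverted, up to conjugation, by $\theta$. The assignment $x\mapsto x^{-1}$, $y\mapsto y^{-1}$ defines an automorphism $\theta_0$ of $F$ (it permutes the defining relators $x^p,y^p$); since each $\gamma_j(F)$ is characteristic in $F$, $\theta_0$ descends to an automorphism $\theta$ of $G$. For the first pair $\{x_1,y_1\}:=\{x,y\}$ (identifying elements with their images in $G$), $\theta$ directly inverts with $g_1=1$, so the strongly real condition for this pair is automatic.

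For the second pair my key observation is that if $w=w_1\cdots w_n$ is a word in the alphabet $\{x^{\pm1},y^{\pm1}\}$, then $\theta(w)=w_1^{-1}\cdots w_n^{-1}$ while $w^{-1}=w_n^{-1}\cdots w_1^{-1}$, so $\theta(w)=w^{-1}$ if and only if $w$ reads the same forwards and backwards, i.e.\ $w$ is a \emph{palindrome}. Thus any palindromic pair $\{x_2,y_2\}$ automatically satisfies the strongly real condition with $g_2=1$. My candidate is $\{x_2,y_2\}:=\{xyx,\,x^2yx^2\}$ when $p\geq 5$; for smaller primes the exponents must be adjusted.

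The remaining and most substantial task is to verify $\Sigma(x_1,y_1)\cap\Sigma(x_2,y_2)=1$. Since $F/F'\cong C_p\times C_p$ and $G^p\subseteq G'$, the Frattini quotient $G/\Phi(G)$ is $\F_p^2$; any cyclic subgroup $\langle a\rangle^g$ with $a\notin\Phi(G)$ projects to a line in $\F_p^2$, and two such subgroups projecting to distinct lines can meet only inside $\Phi(G)=G'$. So my first subtask is to check that the six images of $x_1,y_1,x_1y_1,x_2,y_2,x_2y_2$ in the Frattini quotient give pairwise distinct projective directions --- for the candidate pair above these are $[1{:}0],[0{:}1],[1{:}1],[2{:}1],[4{:}1],[3{:}1]$, which is fine as soon as $p\geq 5$. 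The second subtask is to control $p$-th powers: because $x^p=y^p=1$ in $G$, the only cyclic subgroups that could still interact through $G'$ are generated by $(xy)^p$, $(x_2y_2)^p$ and their conjugates; by the Hall--Petresco collection formula these $p$-th powers lie in $\gamma_p(F)$ modulo higher-weight terms, and the arithmetic hypothesis $i=k(p-1)+1$ is calibrated precisely so that they sit in a controlled graded piece of the lower central series of $G$, from which the required trivial intersection can be read off directly.

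The main obstacle is this final commutator / Lie-theoretic calculation in the graded quotients $\gamma_j(F)/\gamma_{j+1}(F)$ of the free product $F=C_p*C_p$: one has to pin down $(xy)^p$ and $(x_2y_2)^p$ exactly, and rule out collisions of their conjugacy classes with those of the order-$p$ generators. The sharpest test is the edge case $p=3$, where $\F_3^2$ has only four projective lines and the "six distinct directions" route of the previous paragraph is unavailable; there I would have to replace the simple palindrome $x^2yx^2$ by a more elaborate palindromic word and argue, using the rigid symmetry of palindromes together with the structural regularity of the filtration at indices $k(p-1)+1$, that cyclic subgroups with coincident image in $G/\Phi(G)$ still intersect trivially up to conjugation.
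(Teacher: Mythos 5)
Your palindrome observation is correct and gives a genuinely different (and elegant) way to secure the strongly real condition: the paper instead takes the second pair to be $\{(uw)^{-1},vz\}$ with $w,z$ central elements of $\gamma_i(F)/\gamma_{i+1}(F)$ and exploits that $i=k(p-1)+1$ is odd, so the induced automorphism inverts that whole quotient. The gap is in what you yourself flag as ``the most substantial task,'' and it is not a routine calculation you have merely deferred --- the plan as stated would fail. First, for $k\ge 2$ the elements $xyx$, $x^2yx^2$ and $xyx^3yx^2$ all have order $p^k>p$ in $F/\gamma_{i+1}(F)$ (e.g.\ $xyx$ is conjugate to $yx^2$, which maps onto an element of order $p^k$ in a suitable quotient of the infinite pro-$p$ group of maximal class, exactly as the paper shows for $xy$). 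Hence your claim that ``the only cyclic subgroups that could still interact through $G'$ are generated by $(xy)^p$, $(x_2y_2)^p$'' is wrong: every pair among $\{xy, xyx, x^2yx^2, x_2y_2\}$ consists of two elements of order $p^k$, distinct Frattini directions say nothing about their intersection inside $\Phi(G)$, and what must be compared are the socles $\langle \xi^{p^{k-1}}\rangle$ of the conjugate cyclic subgroups, not their $p$-th powers. The paper avoids ever having to compare two order-$p^k$ subgroups from different triples by arranging (via an epimorphism onto the maximal class quotient $P$) that one of the two relevant images lies outside the abelian maximal subgroup $P_1$ and hence has order $p$; nothing in your Hall--Petresco sketch substitutes for this.

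Second, the case $p=3$ is not an edge case you can wave at: the theorem claims all odd primes, $\P^1(\F_3)$ has only four lines, so two of your six elements necessarily share a Frattini direction, and then the only available tool is the paper's Lemma 2.6 (conjugates of $\langle x\rangle$ and $\langle xt\rangle$ intersect trivially when $t\in\Phi(G)$ avoids the commutator set $\{[x,g]\mid g\in G\}$). Making that applicable is the technical heart of the paper: one must produce an element of $\gamma_i(F)/\gamma_{i+1}(F)$ not of the form $[x,g]$, which is done by mapping onto a quotient of the Nottingham group and invoking Klopsch's centralizer formula. Your proposal contains no mechanism for producing such an element, and ``the rigid symmetry of palindromes together with the structural regularity of the filtration'' does not supply one. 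Without these two ingredients the proof does not close.
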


 Note that in \cite{gul}, it was recently shown that all $p$-central quotients of the free product $F=\langle x,y \mid x^p , y^p \rangle$  are Beauville groups. Observe that since $F/F'$ has exponent $p$, the lower central series and $p$-central series of $F$ coincide.

\section{ Proof of the main theorem}

In this section, we give the proof of Theorem A. Let $F=\langle x,y \mid x^p,y^p \rangle$ be the free product of two cyclic groups of order $p$. We begin by stating a lemma regarding the existence of an automorphism of $F$ which sends the generators to their inverses. The proof is left to the reader.

\begin{lem}
	\label{isomorphism of F}
	Let $F=\langle x,y \mid x^p,y^p \rangle$ be the free product of two cyclic groups of order $p$. Then the map
	\begin{align*} 
	\theta \colon F & \longrightarrow F \\
	x & \longmapsto x^{-1}\\
	y & \longmapsto y^{-1},
	\end{align*}
	is an automorphism of $F$.
\end{lem}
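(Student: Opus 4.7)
The plan is to invoke the universal property of the free product. Recall that $F = C_p * C_p$ is characterized by the fact that for any group $H$ and any pair of homomorphisms $\phi_1, \phi_2 \colon C_p \to H$, there is a \emph{unique} homomorphism $F \to H$ restricting to $\phi_1$ and $\phi_2$ on the two factors. Equivalently, since $F$ has the presentation $\langle x,y \mid x^p, y^p \rangle$, any assignment $x \mapsto u$, $y \mapsto v$ with $u^p = v^p = 1$ in a group $H$ extends uniquely to a homomorphism $F \to H$.

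First I would apply this with $H = F$ and $u = x^{-1}$, $v = y^{-1}$. Since $x^p = y^p = 1$ in $F$, the elements $x^{-1}$ and $y^{-1}$ also satisfy $(x^{-1})^p = (y^{-1})^p = 1$, so the universal property produces a unique homomorphism $\theta \colon F \to F$ with $\theta(x) = x^{-1}$ and $\theta(y) = y^{-1}$.

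Next I would verify that $\theta$ is bijective by exhibiting an inverse. Computing on generators,
\[
(\theta \circ \theta)(x) = \theta(x^{-1}) = \theta(x)^{-1} = x, \qquad (\theta \circ \theta)(y) = y.
\]
Since $\theta \circ \theta$ and $\mathrm{id}_F$ are two homomorphisms $F \to F$ agreeing on the generating set $\{x,y\}$, the uniqueness clause of the universal property forces $\theta \circ \theta = \mathrm{id}_F$. Thus $\theta$ is an involution of $F$, and in particular an automorphism.

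There is essentially no obstacle here: the only minor point to keep in mind is that one must know $x$ and $y$ genuinely have order $p$ in $F$ (so that $x^{-1}, y^{-1}$ are well-defined inverses and not, say, trivial), but this is standard — the canonical maps of the factors into a free product are injective.
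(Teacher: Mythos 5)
Your proof is correct, and it is exactly the standard argument the paper has in mind when it says ``the proof is left to the reader'': von Dyck's theorem (the universal property of the presentation) gives a well-defined endomorphism $\theta$, and $\theta\circ\theta=\mathrm{id}_F$ on generators forces $\theta$ to be an involution, hence an automorphism. Since the paper supplies no proof of its own, there is nothing to compare beyond noting that your write-up, including the remark that $x$ and $y$ genuinely have order $p$ in the free product, is complete and correct.
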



Before we proceed, we will introduce some results regarding the Nottingham group which will help us to determine some properties of $F$.

The \emph{Nottingham group\/} $\No$ over the field $\F_p$, for odd $p$, is the (topological) group of normalized automorphisms of the ring $\F_p[[t]]$ of formal power series. For any positive integer $k$, the automorphisms $f\in\No$ such that $f(t)=t+\sum_{i\ge k+1} \, a_it^i$ form an open normal subgroup $\No_k$ of $\No$ of index $p^{k-1}$.
Observe that $|\No_k:\No_{k+1}|=p$ for all $k\ge 1$.
We have the commutator formula
\begin{equation}
\label{comms Ni}
[\No_k,\No_{\ell}]
=
\begin{cases}
\No_{k+\ell},
&
\text{if $k\not\equiv \ell\pmod p$,}
\\
\No_{k+\ell+1},
&
\text{if $k\equiv \ell\pmod p$}
\end{cases}
\end{equation}
(see \cite{cam}, Theorem 2).
Thus the lower central series of $\No$ is given by
\begin{equation}
\label{lcs}
\gamma_i(\No)= \No_{r(i)},
\quad
\text{where}
\quad
r(i)=i+1+\floor*{\frac{i-2}{p-1}}.
\end{equation}
As a consequence, $|\gamma_i(\No):\gamma_{i+1}(\No)|\le p^2$, and we have `diamonds' of order $p^2$ if and only if $i$ is of the form $i=k(p-1)+1$ for some $k\ge 0$.
In other words, the diamonds in the lower central series of $\No$ correspond to quotients $\No_{kp+1}/\No_{kp+3}$.

Recall that by Remark 3 in \cite{cam}, $\No$ is topologically generated by the elements $a\in \No_1 \smallsetminus \No_2$ and $b\in \No_2\smallsetminus \No_3$ given by
$a(t) = t(1-t)^{-1}$ and $b(t)= t(1-2t)^{-1/2}$, which are both of order $p$.

In the following lemma, we need a result of Klopsch \cite[formula (3.4)]{klo} regarding the centralizers of elements of order $p$ of $\No$ in some quotients
$\No/\No_k$.
More specifically, if $f\in \No_k\smallsetminus \No_{k+1}$ is of order $p$, then for every
$\ell=k+1+pn$ with $n\in\N$, we have
\begin{equation}
\label{centralizer}
C_{\No/\No_{\ell}}(f\No_{\ell}) = C_{\No}(f)\No_{\ell-k}/\No_{\ell}.
\end{equation}

\begin{lem}
	\label{non-covering}
	Put $G=\No/\No_{kp+3}$ and $N_i= \No_i/ \No_{kp+3}$ for $1\leq i \leq kp+3$.
	If $\alpha$ is the image of $a$ in $G$, then the set
	$\{[\alpha,g]\mid g\in G\}$ does not cover $N_{kp+1}$.
\end{lem}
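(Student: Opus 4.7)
The plan is to understand exactly which elements of $N_{kp+1}$ (a group of order $p^2$ by the diamond structure) can occur as commutators $[\alpha,g]$, and to show that they form a proper subgroup of $N_{kp+1}$.

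First I would check that $N_{kp+1}$ is central in $G$. By the commutator formula \eqref{comms Ni}, $[N_1,N_{kp+1}] \subseteq N_{kp+3}$ since $kp+1 \equiv 1 \pmod p$, while $[N_j,N_{kp+1}] \subseteq N_{kp+3}$ for $j \ge 2$ is automatic; since $N_{kp+3}=1$ in $G$, we obtain $N_{kp+1} \subseteq Z(G)$.

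Next, set $H = \{g \in G : [\alpha,g] \in N_{kp+1}\}$, the preimage in $G$ of $C_{G/N_{kp+1}}(\alpha N_{kp+1})$, which is a subgroup. Since the values $[\alpha,g]$ for $g \in H$ lie in the central subgroup $N_{kp+1}$, the standard identity $[\alpha,gh]=[\alpha,h]\,[\alpha,g]^h$ simplifies to $[\alpha,gh]=[\alpha,g][\alpha,h]$, so $g \mapsto [\alpha,g]$ restricts to a homomorphism $H \to N_{kp+1}$. Its image $I$ is a subgroup, and since any $g \notin H$ has $[\alpha,g] \notin N_{kp+1}$ by definition of $H$, one has $\{[\alpha,g]:g\in G\} \cap N_{kp+1} = I$. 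It therefore suffices to prove $I$ is proper.

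The heart of the argument is to show $I \cap N_{kp+2}=\{1\}$. Suppose $g \in H$ with $[\alpha,g] \in N_{kp+2}$. Then $g\No_{kp+2}$ centralizes $a\No_{kp+2}$ in $\No/\No_{kp+2}$, and Klopsch's formula \eqref{centralizer}, applied to $a$ (which has depth $1$) with $n=k$ so that $\ell = 1+1+pk = kp+2$, gives $C_{\No/\No_{kp+2}}(a\No_{kp+2}) = C_\No(a)\No_{kp+1}/\No_{kp+2}$. So, modulo $\No_{kp+3}$, we may write $g = g_0 g_1$ with $g_0 \in C_\No(a)$ and $g_1 \in \No_{kp+1}$. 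The commutator identity together with $[a,g_0]=1$ yields $[a,g]=[a,g_1] \in [\No_1,\No_{kp+1}] \subseteq \No_{kp+3}$, i.e.\ $[\alpha,g]=1$. Hence $I$ injects into $N_{kp+1}/N_{kp+2}$, so $|I| \le p < p^2 = |N_{kp+1}|$, and any element of $N_{kp+1}\setminus I$ fails to be a commutator with $\alpha$.

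I expect the main difficulty to be the alignment with Klopsch's centralizer formula: the formula is only available for $\ell \equiv k_{\text{depth}}+1 \pmod p$ (here $\ell \equiv 2 \pmod p$), and it is a fortunate coincidence that at $\ell = kp+2$ the centralizer depth $\ell-1 = kp+1$ matches precisely the subgroup we wish to miss. Spotting this numerical fit, and using the centrality of $N_{kp+1}$ to convert the commutator map into a homomorphism, is the essential idea.
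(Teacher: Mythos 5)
Your proof is correct and its key step is exactly the paper's: apply Klopsch's centralizer formula at level $\ell=kp+2$ to write any $g$ with $[\alpha,g]\in N_{kp+2}$ as a product of a centralizing element and an element of $\No_{kp+1}$, then use centrality of $N_{kp+1}$ (equivalently $[\No_1,\No_{kp+1}]=\No_{kp+3}$) to conclude $[\alpha,g]=1$. The extra packaging via the subgroup $H$ and the homomorphism onto $I$ is a harmless refinement; the paper simply notes that the nontrivial elements of $N_{kp+2}\leq N_{kp+1}$ are then not commutators with $\alpha$, which already proves the lemma.
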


\begin{proof}
	To prove the lemma, we  will show that $\{[\alpha,g] \mid g\in G \}\cap N_{kp+2}=1$. Assume that $[\alpha,g]\in N_{kp+2}$ for some $g\in G$. Since $a\in \No_1\smallsetminus \No_2$ is of order $p$, it follows from (\ref{centralizer}) that
	\[
	C_{\No/ \No_{kp+2}}(a\No_{kp+2})=C_{\No}(a)\No_{kp+1}/\No_{kp+2}.
	\]
	Thus we can write $g=ch$, with $[\alpha,c]=1$ and $h\in N_{kp+1}$.
	Then $[\alpha,g]=[\alpha,h]\in [G, N_{kp+1}]=1$, since
	$N_{kp+1}$ is central in $G$.
\end{proof}

\begin{lem}
	\label{non-covering in free product}
	Put $H=F/\gamma_{i+1}(F)$, where $i=k(p-1)+1$ for $k\geq 1$ and $H_i= \gamma_i(F)/ \gamma_{i+1}(F)$. If  $u$ and $v$ are the images of $x$ and $y$ in $H$, respectively, then the sets $\{[u, h] \mid h\in H\}$ and $\{[v, h] \mid h\in H\}$ do not cover $H_i$.
\end{lem}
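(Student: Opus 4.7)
My plan is to show that each of the two intersections $A_u := \{[u,h]\mid h\in H\}\cap H_i$ and $A_v := \{[v,h]\mid h\in H\}\cap H_i$ is a \emph{proper subgroup} of $H_i$, and then invoke the classical fact that a group is never a set-theoretic union of two of its proper subgroups.

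The observation that makes this approach work is that $H_i$ is central in $H$: since $H=F/\gamma_{i+1}(F)$ we have $\gamma_{i+1}(H)=1$, hence $[H_i,H]=\gamma_{i+1}(H)=1$. Consequently, whenever $[u,h]$ happens to lie in $H_i$ it is fixed by conjugation, so the identity $[u,h_1h_2]=[u,h_2]\,[u,h_1]^{h_2}$ collapses to $[u,h_1h_2]=[u,h_1]\,[u,h_2]$. A short check using this shows that $\{h\in H\mid[u,h]\in H_i\}$ is a subgroup of $H$ on which $[u,-]$ restricts to a group homomorphism whose image is precisely $A_u$; thus $A_u$ is a subgroup of $H_i$, and the same argument applied to $v$ gives that $A_v$ is a subgroup. (The full sets $\{[u,h]\mid h\in H\}$ are not subgroups of $H$; intersecting with the center is what saves us.)

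I then show $A_u\subsetneq H_i$ by transporting Lemma \ref{non-covering}. Using $a^p=b^p=1$ and density of $\langle a,b\rangle$ in $\No$, the universal property of the free product yields a surjective homomorphism $\bar\phi\colon H\twoheadrightarrow G=\No/\No_{kp+3}$ with $\bar\phi(u)=\alpha$ and $\bar\phi(v)=\beta$ (well-definedness uses $\gamma_{i+1}(\No)=\No_{kp+3}$, read off from equation (\ref{lcs})). Because $\bar\phi$ is surjective and homomorphisms preserve the lower central series, $\bar\phi(H_i)=\gamma_i(G)=N_{kp+1}$. If $A_u$ equalled $H_i$, then
\[
N_{kp+1}\;=\;\bar\phi(H_i)\;=\;\bar\phi(A_u)\;\subseteq\;\bar\phi\bigl(\{[u,h]\mid h\in H\}\bigr)\;=\;\{[\alpha,g]\mid g\in G\},
\]
contradicting Lemma \ref{non-covering}; so $A_u\subsetneq H_i$. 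For $A_v$ I use the automorphism $\sigma$ of $F$ swapping $x$ and $y$, which descends to $\bar\sigma\in\Aut(H)$ swapping $u$ and $v$ and preserving $H_i$; one checks $\bar\sigma(A_u)=A_v$, so $A_v$ is proper as well.

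The last step is the elementary fact that a group is never the union of two proper subgroups: given $a\in A\smallsetminus B$ and $b\in B\smallsetminus A$, the product $ab$ can lie in neither $A$ (else $b\in A$) nor $B$ (else $a\in B$). Applied to $A_u,A_v\subsetneq H_i$, this produces an element of $H_i$ missed by both $\{[u,h]\mid h\in H\}$ and $\{[v,h]\mid h\in H\}$, which is what the lemma claims. The main conceptual step, and the one I expect to need the most care in writing, is the subgroup property of $A_u$; once this is in hand, Lemma \ref{non-covering}, the symmetry via $\sigma$, and the union-of-proper-subgroups fact assemble the conclusion directly.
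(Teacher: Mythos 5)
Your proof is correct, and its engine is the same as the paper's: the epimorphism $H \twoheadrightarrow G=\No/\No_{kp+3}$ sending $u,v$ to $\alpha,\beta$ (well-defined because $\gamma_{i+1}(\No)=\No_{kp+3}$, so $\gamma_{i+1}(G)=1$), through which Lemma \ref{non-covering} is transported to $H_i=\gamma_i(H)$ via $\bar\phi(H_i)=\gamma_i(G)=N_{kp+1}$, with the case of $v$ handled by symmetry --- the paper simply says ``the roles of $u$ and $v$ are symmetric'' where you make the swap automorphism of $F$ explicit, which is a welcome clarification. Where you genuinely diverge is in what you prove. The paper reads the lemma distributively: each of the two commutator sets separately fails to cover $H_i$, which is exactly what Theorem \ref{strongly real} uses, since it picks possibly \emph{distinct} elements $w\notin\{[u,h]\mid h\in H\}$ and $z\notin\{[v,h]\mid h\in H\}$. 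You instead prove the strictly stronger claim that a \emph{single} element of $H_i$ avoids both sets, and your two extra ingredients are sound: since $H_i$ is central in $H$, the set $\{h\in H\mid [u,h]\in H_i\}$ is a subgroup on which $h\mapsto[u,h]$ is a homomorphism (the identity $[u,h_1h_2]=[u,h_2][u,h_1]^{h_2}$ collapses as you say), so $A_u$ is a subgroup of $H_i$, proper by the transport argument; and a group is never the union of two proper subgroups (your element argument needs $A_u\not\subseteq A_v$ and $A_v\not\subseteq A_u$, but the containment case is trivial since then the union is already a proper subgroup, so no gap). In short, your route buys a marginally stronger conclusion --- one common witness instead of two separate ones --- at the cost of two extra correct steps, while the paper's proof is shorter because the weaker, separate non-covering statements are all its application requires.
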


\begin{proof}
Let $G= \No/\No_{kp+3}$, and let us call $\alpha$ and $\beta$ the images of $a$ and $b$ in $G$, respectively. Since $\alpha$ and $\beta$ are of order $p$ and $\gamma_{i+1}(G)=1$, the map
\begin{align*} 
\psi \colon H & \longrightarrow G \\
 u & \longmapsto \alpha \\
 v & \longmapsto \beta,
\end{align*}
is well-defined and an epimorphism.

By Lemma \ref{non-covering}, the set of commutators of $\alpha$ does not cover the subgroup $\gamma_i(G)=N_{kp+1}$.
 It then follows that the set $\{[u, h] \mid h\in H\}$ does not cover $H_i$.
Since the roles of $u$ and $v$ are symmetric, we also conclude that the set $\{[v, h] \mid h\in H\}$ does not cover $H_i$, as desired.

\end{proof}

To prove the main result, we need  the following three lemmas.

\begin{lem}
	\label{intersection1}
	Let $G= \langle  a, b\rangle$ be a $2$-generator  $p$-group and $o(a)=p$, for some prime $p$. Then
	\[
	\Big(\bigcup_{g\in G} {\langle a\rangle}^g  \Big)
	\bigcap
	\Big(\bigcup_{g\in G} {\langle b\rangle}^g \Big) 
	= 1.
	\]
\end{lem}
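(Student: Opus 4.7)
The plan is a proof by contradiction using the Frattini quotient $G/\Phi(G)$ and the Burnside basis theorem. Suppose the intersection contains some $z\neq 1$, so $z\in\langle a\rangle^g\cap\langle b\rangle^h$ for some $g,h\in G$; the case $b=1$ is trivial, so I assume $b\neq 1$ and hence $o(b)$ is a nontrivial power of $p$.

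First I would pin down the subgroup $\langle a\rangle^g$ inside $\langle b\rangle^h$. Since $o(a)=p$, the group $\langle a\rangle^g$ has order $p$, so $\langle a\rangle^g=\langle z\rangle\subseteq\langle b\rangle^h$. As $\langle b\rangle^h$ is cyclic of $p$-power order, it contains a \emph{unique} subgroup of order $p$, namely $\langle b^{o(b)/p}\rangle^h$. Therefore
\[
a^g=\bigl((b^{o(b)/p})^h\bigr)^m \quad\text{for some integer $m$ with $\gcd(m,p)=1$.}
\]

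Next I would project to the Frattini quotient. By the Burnside basis theorem, the images $\bar a,\bar b$ form an $\F_p$-basis of the elementary abelian group $G/\Phi(G)$. Taking images of the displayed equality, the left-hand side becomes $\bar a$ and the right-hand side becomes $\bar b^{\,(o(b)/p)m}$. If $o(b)=p$ this reads $\bar a=\bar b^{\,m}$ with $m\not\equiv 0\pmod p$, which contradicts the linear independence of $\bar a$ and $\bar b$; if $o(b)=p^k$ with $k\ge 2$, then the exponent $(o(b)/p)m=p^{k-1}m$ is divisible by $p$, so the right-hand side is trivial in the exponent-$p$ group $G/\Phi(G)$, giving $\bar a=1$, again a contradiction.

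The only nontrivial step is the passage from the containment $\langle a\rangle^g\subseteq\langle b\rangle^h$ to an actual element identity, which uses that cyclic $p$-groups have a unique subgroup of each order. Everything else is a routine image computation in $G/\Phi(G)$, so I do not anticipate any further obstacle.
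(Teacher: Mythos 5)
Your proof is correct and follows essentially the same route as the paper: both arguments hinge on the Burnside basis theorem, passing to $G/\Phi(G)$ where $\bar a$ and $\bar b$ are independent, combined with the hypothesis $o(a)=p$. The paper phrases it slightly more directly---the common element lies in $\Phi(G)$ but also in $\langle a^g\rangle$, a subgroup of order $p$ not contained in $\Phi(G)$, hence is trivial---which avoids your detour through the unique subgroup of order $p$ of the cyclic group $\langle b\rangle^h$, but the substance is the same.
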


\begin{proof}
		We assume that $x= (a^i)^g=(b^j)^h$ for some $i,j \in \mathbb{Z}$ and $g, h \in G$, and prove that $x=1$. In the quotient 
		$\overline{G}= G/ \Phi(G)=\langle \overline{a} \rangle \times \langle \overline {b} \rangle$, we have $\overline {x} \in \langle \overline {a} \rangle \cap \langle \overline {b} \rangle = \overline {1}$ implying that  $x \in \Phi(G)$. On the other hand, $x \in \langle a^g \rangle$,
		where $a^g $ is of order $p$ and $a^g \notin \Phi(G)$. It then follows that $x=1$.
\end{proof}

\begin{lem} \textup{\cite[Lemma~3.8]{FG} }
	\label{intersection2}
	Let $G$ be a finite $p$-group and  let $x \in G \smallsetminus \Phi(G)$ be an element of order $p$. If $t \in \Phi(G)\smallsetminus\{[x,g]\mid g\in G\}$ then 
	\[
	\Big(\bigcup_{g\in G} {\langle x\rangle}^g  \Big)
	\bigcap
	\Big(\bigcup_{g\in G} {\langle xt\rangle}^g \Big)
	= 1.
	\]
\end{lem}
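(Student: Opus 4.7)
The plan is to argue by contradiction. Suppose some nontrivial element $z$ lies in $\langle x\rangle^g\cap\langle xt\rangle^h$ for some $g,h\in G$, and write $z=(x^i)^g=((xt)^j)^h$. The aim will be to force $t=[x,gh^{-1}]$, contradicting the hypothesis that $t$ is not a commutator of $x$ with any element of $G$.

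The first step is to project to $G/\Phi(G)$, which is elementary abelian, so conjugation acts trivially there and $t$ maps to the identity. Both expressions for $z$ therefore reduce to $x^i$ and $x^j$ modulo $\Phi(G)$, and since $x\notin\Phi(G)$ one deduces $i\equiv j\pmod p$. Because $z\ne 1$ and $x$ has order $p$, $i$ is coprime to $p$, and hence so is $j$.

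Next, I would pin down the order of $xt$. Since $z=(x^i)^g$ has order $p$ and $z=((xt)^j)^h$ with $\gcd(j,p)=1$, writing $o(xt)=p^s$ forces $(xt)^j$ to have order $p^s$; as this must equal $o(z)=p$, we get $s=1$. Thus $\langle z\rangle$ is the unique subgroup of order $p$ in each of the cyclic groups $\langle x\rangle^g$ and $\langle xt\rangle^h$, so $\langle x\rangle^g=\langle xt\rangle^h$, and therefore $x^g=((xt)^m)^h$ for some $m$ coprime to $p$.

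To conclude, I would project to $G/\Phi(G)$ a second time: the equality $x^g=((xt)^m)^h$ yields $\bar x=m\bar x$ in the Frattini quotient, forcing $m\equiv 1\pmod p$. Since $xt$ has order $p$ this gives $(xt)^m=xt$, hence $x^{gh^{-1}}=xt$ and $t=x^{-1}x^{gh^{-1}}=[x,gh^{-1}]$, contradicting the choice of $t$. The only step that requires any care is the order computation pinning down $o(xt)=p$; everything else is routine bookkeeping in the Frattini quotient.
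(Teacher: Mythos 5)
Your proof is correct. Note that the paper does not prove this lemma itself --- it is quoted from \cite{FG} --- and your argument is essentially the standard one from that source: pass to the Frattini quotient to match exponents and force $i\equiv j\pmod p$, deduce $o(xt)=p$, conclude that $\langle x\rangle^g=\langle xt\rangle^h$, and extract $t=[x,gh^{-1}]$ to contradict the hypothesis on $t$.
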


\begin{lem}\textup{\cite[Lemma~3.1]{gul} }
	\label{homomorphism}
	Let $\psi \colon  G_1 \to G_2$ be a group homomorphism, let $x_1, y_1 \in G_1$ and $x_2= \psi(x_1)$, $y_2=\psi(y_1)$. If $o(x_1)=o(x_2)$ then the condition $\langle x_2^{\psi(g)} \rangle \cap \langle y_2^{\psi(h)} \rangle= 1$ implies that $\langle x_1^{g} \rangle \cap \langle y_1^{h} \rangle= 1$ for  $g,h \in G_1$.
\end{lem}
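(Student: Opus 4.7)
The plan is a direct element-chasing argument. I would start by taking an arbitrary element $z \in \langle x_1^{g} \rangle \cap \langle y_1^{h} \rangle$ and aiming to show $z=1$. By definition of the two cyclic subgroups, write $z=(x_1^{g})^{i}=(y_1^{h})^{j}$ for some integers $i,j$.

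Next I would push this relation through $\psi$. Since homomorphisms commute with conjugation and with taking powers,
\[
\psi(z) = \bigl(\psi(x_1)^{\psi(g)}\bigr)^{i}=\bigl(x_2^{\psi(g)}\bigr)^{i}
\quad\text{and}\quad
\psi(z) = \bigl(y_2^{\psi(h)}\bigr)^{j}.
\]
Thus $\psi(z)$ lies in $\langle x_2^{\psi(g)} \rangle \cap \langle y_2^{\psi(h)} \rangle$, which by hypothesis is trivial, so $\psi(z)=1$. In particular $\bigl(x_2^{\psi(g)}\bigr)^{i}=1$, equivalently $x_2^{i}=1$, so $o(x_2)$ divides $i$.

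Now I invoke the hypothesis $o(x_1)=o(x_2)$: it gives $o(x_1)\mid i$, hence $x_1^{i}=1$ and therefore $z=(x_1^{g})^{i}=g^{-1}x_1^{i}g=1$. This is the only place the order-preservation hypothesis enters, and it is also the only obstacle: without it, one would have a $\psi$ that collapses some nontrivial power of $x_1$, and the deduction $x_1^{i}=1$ would fail. Since $z$ was arbitrary, $\langle x_1^{g} \rangle \cap \langle y_1^{h} \rangle=1$, completing the proof.
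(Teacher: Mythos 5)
Your proof is correct and is precisely the standard element-chasing argument behind \cite[Lemma~3.1]{gul}, which this paper cites without reproducing a proof: push an element of the intersection through $\psi$, use triviality downstairs to get $x_2^i=1$, and use $o(x_1)=o(x_2)$ to pull $x_1^i=1$ back upstairs. The only cosmetic caveat is that if $o(x_2)$ were infinite, ``$o(x_2)$ divides $i$'' should read ``$i=0$'', which gives $z=1$ directly, so the argument covers all cases.
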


 Let $H=F/\gamma_{i+1}(F)$ and let $u$ and $v$ be the images of $x$ and $y$ in $H$, respectively. In order to prove the main theorem, we need to know the order of $uv$. We first recall a result of Easterfield \cite{eas} regarding the exponent of $\Omega_j(G)$. More precisely, if $G$ is a $p$-group, then for every $j,k\geq 1$, the condition $\gamma_{k(p-1)+1}(G)=1$ implies that
 \begin{equation}
 \label{exp of omega}
 \exp \Omega_j(G)\leq p^{j+k-1}.
 \end{equation}
 
 If we set $k=\ceil*{\frac{i}{p-1}}$, we have $\gamma_{k(p-1)+1}(H)\leq \gamma_{i+1}(H)=1$. Then  by (\ref{exp of omega}), we get $\exp H \leq p^k$, and hence $o(uv)\leq p^k$. Indeed, we will show that $o(uv)=p^k$. To this purpose, we also need to introduce a result regarding $p$-groups of maximal class with some specific properties.

Let $G=\langle s \rangle \ltimes A$ where $s$ is of order $p$ and $A\cong \Z_p^{p-1}$. The action of $s$ on $A$ is via $\theta$, where $\theta$ is defined by the companion matrix of the $p$th cyclotomic polynomial $x^{p-1}+\dots +x+1$. Then $G$ is the only infinite pro-$p$ group of maximal class. Since $s^p=1$ and $\theta^{p-1}+\dots +\theta+1$ annihilates $A$, this implies that for every $a\in A$,
\[
(sa)^p
=
s^p a^{s^{p-1}+\dots +s+1}
=
1.
\]
Thus all elements in $G\smallsetminus A$ are of order $p$.

Let $P$ be a finite quotient of $G$ of order $p^{i+1}$ for $i\geq 2$. Let us call $P_1$ the abelian maximal subgroup of $P$ and $P_j=[P_1, P, \overset{j-1}{\ldots}, P]=\gamma_j(P)$ for $j\geq 2$. Then one can easily check that 
$\exp P_j= p^ {\ceil*{\frac{i+1-j}{p-1}} }$ and every element in $P_j\smallsetminus P_{j+1}$ is of order $p^ {\ceil*{\frac{i+1-j}{p-1}} }$.

Let $s\in P\smallsetminus P_1$ and $s_1\in P_1\smallsetminus P'$.  Since all elements in $P \smallsetminus P_1$ are of order $p$ and $\gamma_{i+1}(P)=1$, the map
\begin{align*} 
\psi \colon H & \longrightarrow P \\ 
u& \longmapsto s^{-1}\\
v& \longmapsto ss_1,
\end{align*}
is well-defined and an epimorphism. Then we have $o(uv) \geq o(s_1)=p^k$, and this, together with  $\exp H =p^k$, implies that $o(uv)=p^k$.

We are now ready to give the proof of main theorem.

\begin{thm}
	\label{strongly real}
	Let  $p\geq 3$, and let $i= k(p-1)+1$ for $k\geq1$. Then the quotient $F/\gamma_{i+1}(F)$ is a strongly real Beauville group.
\end{thm}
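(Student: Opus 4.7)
The plan is to verify that $\{\{u,v\},\{u\alpha,v^{-1}\beta\}\}$ is a strongly real Beauville structure for $H=F/\gamma_{i+1}(F)$, where $\alpha,\beta\in H_i=\gamma_i(H)$ are chosen via Lemma \ref{non-covering in free product} so that $\alpha\notin\{[u,g]\mid g\in H\}$ and $\beta\notin\{[v,g]\mid g\in H\}$. Both pairs are generating sets for $H$: trivially for $\{u,v\}$, and for the second because $\alpha,\beta\in\Phi(H)$, so the images in $H/\Phi(H)\cong\F_p^2$ are the linearly independent vectors $\bar u$ and $-\bar v$.

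For the strongly real condition I would take $g_1=g_2=1$ together with $\theta$ from Lemma \ref{isomorphism of F}. The key point is that $i=k(p-1)+1$ is odd (since $p-1$ is even for odd $p$), and $\theta$ acts on the $j$-th graded piece $\gamma_j(H)/\gamma_{j+1}(H)$ of the associated Lie ring of $H$ by $(-1)^j$, as is visible by tracking signs in iterated commutators of $u^{-1}$ and $v^{-1}$. Hence $\theta$ inverts $H_i$, and since $H_i\subseteq Z(H)$ (because $\gamma_{i+1}(H)=1$), one obtains $\theta(u\alpha)=u^{-1}\alpha^{-1}=(u\alpha)^{-1}$ and $\theta(v^{-1}\beta)=v\beta^{-1}=(v^{-1}\beta)^{-1}$.

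The Beauville condition $\Sigma(u,v)\cap\Sigma(u\alpha,v^{-1}\beta)=1$ splits into nine pairwise intersections of the form $\langle a\rangle^g\cap\langle b\rangle^h$, using centrality of $\alpha$ to rewrite $(u\alpha)(v^{-1}\beta)=uv^{-1}\alpha\beta$ and $\langle v^{-1}\beta\rangle=\langle v\beta^{-1}\rangle$. Eight of them are routine: the intersections $\langle u\rangle\cap\langle u\alpha\rangle$ and $\langle v\rangle\cap\langle v\beta^{-1}\rangle$ are handled by Lemma \ref{intersection2} (for the latter one uses that $\{[v,g]\mid g\in H\}\cap H_i$ is closed under inversion, since $[v,g^{-1}]=[v,g]^{-1}$ holds in the central abelian group $H_i$, so $\beta\notin\{[v,g]\}$ forces $\beta^{-1}\notin\{[v,g]\}$); and the remaining six pairs each contain one of $u$, $v$, $u\alpha$, or $v\beta^{-1}$, all of order $p$ and outside $\Phi(H)$, so Lemma \ref{intersection1} applies once one checks in $H/\Phi(H)$ that the two elements generate $H$.

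The main obstacle is the ninth intersection $\langle uv\rangle^g\cap\langle uv^{-1}\alpha\beta\rangle^h$: here the order of $uv$ exceeds $p$, so Lemma \ref{intersection2} does not apply to it, and the naive choice $\{u\alpha,v\beta\}$ fails because $(uv\alpha\beta)^p=(uv)^p$ would leave a common element. The plan is to apply Lemma \ref{homomorphism} via the epimorphism $\psi\colon H\to P$ built in the preamble, with $\psi(u)=s^{-1}$ and $\psi(v)=ss_1$, under which $o(uv)=o(\psi(uv))=o(s_1)$. A direct computation gives $\psi(uv^{-1}\alpha\beta)\equiv s^{-2}s_1^{-1}\pmod{\Phi(P)}$, so $\psi(uv^{-1}\alpha\beta)\in P\smallsetminus P_1$ and hence has order $p$; together with $s_1$ it generates $P$, since the images in $P/\Phi(P)\cong\F_p^2$ have determinant $2\not\equiv 0\pmod p$. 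Lemma \ref{intersection1} then gives triviality of the intersection in $P$, and Lemma \ref{homomorphism} transports it back to $H$, completing the proof.
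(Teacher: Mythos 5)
Your proposal is correct and follows essentially the same route as the paper's proof: the same reduction to the nine pairwise intersections, the same use of Lemmas \ref{non-covering in free product}, \ref{intersection1} and \ref{intersection2}, the same epimorphism onto the maximal class quotient $P$ combined with Lemma \ref{homomorphism} for the single hard intersection $\langle uv\rangle^g\cap\langle uv^{-1}\alpha\beta\rangle^h$, and the same parity argument showing $\overline{\theta}$ inverts $H_i$. The only deviation is the cosmetic choice of second generating pair ($\{u\alpha,v^{-1}\beta\}$ versus the paper's $\{(uw)^{-1},vz\}$), which costs you the small extra observation --- correctly supplied --- that $\beta\notin\{[v,g]\mid g\in H\}$ forces $\beta^{-1}\notin\{[v,g]\mid g\in H\}$ because these commutators lie in the central subgroup $H_i$.
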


\begin{proof}
	Let $H$ and $H_i$ be as defined in Lemma \ref{non-covering in free product}. Let $u$ and $v$ be the images of $x$ and $y$ in $H$, respectively.
	By Lemma \ref{non-covering in free product}, there exist $w,z\in H_{i}$ such that
	$w\not\in \{[u,h]\mid h\in H\}$ and $z\not\in \{[v,h]\mid h\in H\}$. Observe that $w$ and $z$ are central elements of order $p$ in $H$. We claim that $\{u,v\}$ and 
	$\{(uw)^{-1},vz\}$ form a Beauville structure in $H$.
	Let $X=\{u,v,uv\}$ and $Y=\{(uw)^{-1},vz,u^{-1}vw^{-1}z\}$.
	
	Assume first that $x\in X$ is of order $p$, and let $y\in Y$.
	If $\langle x\Phi(H) \rangle \ne \langle y\Phi(H) \rangle$ in $H/\Phi(H)$, then by  Lemma 
	\ref{intersection1}, $\langle x\rangle^g \cap \langle y \rangle^h=1$ for every $g,h\in H$. Otherwise, we are in one of the following two cases: $x=u$ and $y=(uw)^{-1}$, or
	$x=v$ and $y=vz$. Then the condition $\langle x \rangle^g \cap \langle y \rangle^h=1$ follows by Lemma \ref{intersection2}.
	
	We now assume that  $x=uv$. Again applying  Lemma \ref{intersection1}, we get $\langle x\rangle^g \cap \langle y \rangle^h=1$ where $y= (uw)^{-1}$ or $y=vz$, which is of order $p$. Thus we are only left with the case when $x=uv$ and $y=u^{-1}vw^{-1}z$. Recall that the map $\psi\colon H \longrightarrow P $ is an epimorphism such that $\psi(u)=s^{-1}$ and $\psi(v)=ss_1$. Then $\psi(u^{-1}vw^{-1}z)$ is an element outside $P_1$, which is of order $p$ . Thus $\langle \psi(u^{-1}vw^{-1}z) ^{\psi(g)} \rangle \cap \langle s_1^{\psi(h)}\rangle =1$ for all $g,h \in H$. Since $o(uv)=o(s_1)$, the condition
	$\langle x \rangle^g \cap \langle y \rangle^h=1$ for all $g,h\in H$ follows by Lemma \ref{homomorphism}.
	This completes the proof that $G$ is a Beauville group.
	
	We next show that the Beauville structure $\{\{u,v\}, \{(uw)^{-1},vz\}\}$ is strongly real. By Lemma \ref{isomorphism of F}, we know that the map $\theta$ is an automorphism of $F$. Since $\theta(\gamma_n(F))= \gamma_n(\theta(F))=\gamma_n(F)$ for all $n\geq 1$, the map $\theta$ induces an automorphism $\overline{\theta} \colon H \longrightarrow H$ such that $\overline{\theta}(u)=u^{-1} $ and $\overline{\theta}(v)=v^{-1} $. 
	Now we only need to check if  $\overline{\theta} ((uw)^{-1})= uw$ and  $\overline{\theta} (vz)= (vz)^{-1}$. Note that 
	\[
	\overline{\theta} ((uw)^{-1})
	=\overline{\theta} (w^{-1})u
	=u\overline{\theta} (w^{-1}),
	\]
	and 
	\[
	\overline{\theta} (vz)= v^{-1}\overline{\theta}(z)=\overline{\theta}(z)v^{-1} 
	\]
	where the last equalities follow from the fact that both $w$ and $z$ are central in $H$. Thus it suffices to see that $\overline{\theta}(w^{-1})=w$ and $\overline{\theta}(z)=z^{-1}$.
	
	Note that $H_i$ is generated by the commutators of length $i$ in $u$ and $v$. Since $i$ is odd and $H_i \leq Z(G)$, it follows that
    \[
    \overline{\theta}([x_{j_1}, x_{j_2}, \dots, x_{j_i}])=[x_{j_1}^{-1}, x_{j_2}^{-1}, \dots, x_{j_i}^{-1}]=[x_{j_1}, x_{j_2}, \dots, x_{j_i}]^{-1},
    \]
    where each $x_{j_k}$ is either $u$ or $v$.	
	Hence the automorphism $\overline{\theta}$ sends the generators of $H_i$ to their inverses. Since $H_i$ is abelian, this implies that for every $t\in H_i$ we have $\overline{\theta}(t)=t^{-1}$. 
\end{proof}

\end{document}